\documentclass[12pt]{article}                                  % Needed to meet printer requirements.

\usepackage{amssymb,amsmath,amsthm, amsfonts}
\usepackage{color}
\usepackage{graphicx}

\newtheorem{lemma}{Lemma}
\newtheorem{remark}{Remark}

\newtheorem{definition}{Definition}
\newtheorem{proposition}{Proposition}

\def\dbF{\hbox{\rm l\negthinspace F}}

\def\dbP{\hbox{\rm l\negthinspace P}}

\def\dbR{\hbox{\rm l\negthinspace R}}

%
%Lower case Greek letters
%

%\def\o{\omega}
%
%Upper case Greek letters
%

%\def\O{\Omega}
%
%Calligraphic Capitals
%

\def\cU{{\cal U}}
\def\cV{{\cal V}}

\title{\LARGE \bf
On the Solvability of Risk-Sensitive Linear-Quadratic  Mean-Field Games
}

\author{ Boualem Djehiche \thanks{B. Djehiche is with KTH Royal Institute of Technology, 
        {\tt\small boualem@kth.se}} \,\, and \,\, Hamidou Tembine  \thanks{ H. Tembine is with New York University,
        {\tt\small tembine@nyu.edu}}
}

\begin{document}

\maketitle
\thispagestyle{empty}
\pagestyle{empty}

%%%%%%%%%%%%%%%%%%%%%%%%%%%%%%%%%%%%%%%%%%%%%%%%%%%%%%%%%%%%%%%%%%%%%%%%%%%%%%%%
\begin{abstract}

In this paper we formulate and solve a  mean-field game described by a linear stochastic dynamics and a quadratic or exponential-quadratic cost functional for  each generic player. The optimal strategies for the players are given explicitly using a simple and direct method based on square completion and  a Girsanov-type change of measure, suggested in Duncan {\it et al.} in e.g.  \cite{ref0, ref1} for the mean-field free case. This approach does not use the well-known solution methods such as the Stochastic Maximum Principle and the Dynamic Programming Principle with Hamilton-Jacobi-Bellman-Isaacs equation and Fokker-Planck-Kolmogorov equation. In the risk-neutral linear-quadratic mean-field game, we show that there is unique best response strategy to the mean of the state and provide a simple sufficient condition of existence and uniqueness of mean-field equilibrium. This approach  gives a basic insight into the solution by providing a simple explanation for the additional term in the robust or risk-sensitive Riccati equation, compared to the risk-neutral Riccati equation. Sufficient conditions for existence and uniqueness of mean-field equilibria are obtained when the horizon length  and risk-sensitivity index are small enough. The method is then extended  to the linear-quadratic robust mean-field games under small disturbance, formulated as a minimax  mean-field game.

\end{abstract}

\section{Introduction}
Mean-field games \cite{ref00} with very large number of players have been widely studied recently. Various solution methods such as the Stochastic Maximum Principle (SMP) (\cite{alain2}) and the  Dynamic Programming Principle (DPP) with Hamilton-Jacobi-Bellman-Isaacs equation and Fokker-Planck-Kolmogorov equation have been proposed \cite{ref00,alain2}. Most studies illustrated these solution methods in the linear-quadratic (LQ) game \cite{bardi1,bardi2}. In this paper, we propose a simple argument that gives the best-response strategy and the best-response cost for the LQ-game without use of the well-known solution methods (SMP and  DPP). We apply a simple square completion and a Girsanov-type change of measure (when it applies), successfully applied by Duncan et al. \cite{ref0,ref1,ref2,ref3,ref4} in the mean-field-free case. This method is well suited to LQ games and can hardly be extended to other dynamics and performance functionals. Applying the solution methodology related to the DPP or the SMP requires involved (stochastic) analysis (e.g. in the risk-sensitive case) and convexity arguments to insure necessary and sufficient optimality criteria. We avoid all this with this method. Although, for this simple case, we note that both  DPP and SMP give the same linear structure of the best-response strategies as the actual square completion method.

In the LQ-mean-field game problems the state process can be modeled by a set of linear 
stochastic differential equations of McKean-Vlasov type and the preferences are formalized by quadratic or exponential of integral of quadratic cost functions with mean-field terms. These game  problems are very popular in the literature and a
detailed exposition of this theory can be found in \cite{alain2,alain}. The popularity of these game problems is due to practical considerations
 in signal processing, pattern recognition, filtering and prediction, economics and management science \cite{ref8}.

To some extent, most of the risk-neutral versions of these optimal controls are analytically and numerically solvable \cite{ref1,ref3,ref4}. On the other hand, the linear quadratic risk-sensitive setting naturally appears if the decision makers' objective is to minimize the effect of a small perturbation and related variance of the optimally controlled nonlinear process. By solving a linear quadratic risk-sensitive  game problem, and using the implied optimal control actions, players can significantly reduce the variance (and the cost) incurred by this perturbation. While the risk-sensitive LQ optimal control has been widely investigated in the literature starting from Jacobson 1973 \cite{Jacobson73}, the mean-field version of the problem has been introduced only recently in \cite{Boualem2014,book2}. 
In that paper, the authors established a stochastic maximum principle for  risk-sensitive mean-field-type control where the key mean-field term is the {\it mean state}.  The linear-quadratic risk-sensitive mean-field game has been first introduced in \cite{TAC2014}.

\subsection*{Contribution}
Our contribution can be summarized as follows. In the risk-neutral linear-quadratic mean-field game, we show that there is a unique admissible best-response strategy to any mean-field process. The argument to derive the best response is 
a simple square completion method and is not based on the classical solution methods used in the literature. We derive a fixed-point equation for the mean-field equilibrium, through the state process. The fixed-point equation is shown to have a unique solution on the entire trajectory when the length of the horizon is small enough.  In the risk-sensitive linear-quadratic mean-field game, we establish a risk-sensitive Riccati equation with mean-field term involved in the coefficients. However the existence a positive admissible solution requires a condition on the risk sensitivity index. There is a unique admissible best response strategy when $\theta \sigma^2 <\frac{b^2}{r},$ where $\theta$ is the risk sensitivity index, $b$ is the control action coefficient drift, $r$ is the weight on the quadratic cost and $\sigma$ is the diffusion coefficient. 
Further, we consider robust mean-field games, formulated as a minmax games, and establish a simple connection between the risk-sensitive best and the robust best response. Interestingly, the technique does not require min-max type partial differential equation (such as the Hamilton-Jacobi-Isaacs equation). We also derive a unique admissible best-response strategy under worst-case  disturbance. Finally, a risk-sensitive and robust mean-field game are considered in a similar way.

\medskip
\subsection*{Structure}
 The remainder of the paper is organized as follows. In the next section we present linear-quadratic mean-field games with risk-neutral cost functional. In Section \ref{sec:risk} we focus on risk-sensitive mean-field games. Section \ref{sec:robust} examines robust mean-field games. In Section \ref{sec:robustrisk} we consider the robust risk-sensitive mean-field games. Section \ref{sec:conclusion} concludes the paper.

 \subsection*{Notation and Preliminaries}
Let $T>0$ be a fixed time horizon and $(\Omega,{\cal{F}},\dbF, \dbP)$  be a given filtered
probability space on which a one-dimensional standard
Brownian motion $B=\{B_s\}_{s\geq0}$ is given, and the filtration $\dbF=\{{\mathcal{F}}_s,\ 0\leq s \leq T\}$ is the natural filtration of $B$ augmented by $\dbP-$null sets of ${\cal F}.$

We introduce the following notation.
\begin{itemize} 
\item Let $k\geq 1.$ ${L}^k(0,T;\dbR)$ is the set of functions $f: \ [0,T] \rightarrow \mathbb{R}$  such that $\int_0^T |f(t)|^k dt<\infty$. 
\item $\mathcal{L}^k_{\dbF}(0,T;\dbR)$ is the set of $\dbF$-adapted $\dbR$-valued processes $X(\cdot)$ such that $E\left[\int_0^T |X(t)|^k dt\right]<\infty$. 

\item For $t\in [0,T]$, $|\psi|_t:=\sup_{0\le s\le t}|\psi(s)|.$
\end{itemize}

An admissible control strategy $u$ is an $\dbF$-adapted and square-integrable process with values in a non-empty subset $U$ of $\dbR$. We denote the set of all admissible controls by $\mathcal{U}$:
$$
\mathcal{U}=\{u(\cdot)\in {\cal L}^2_{\dbF}(0,T;\dbR); \,\, u(t)\in U \,\, a.e.\, t\in[0,T],\,\, \dbP-a.s. \}
$$
Given $u\in\mathcal{U}$, consider the following controlled linear SDE.
\begin{equation}
 \left\{\begin{array}{lll}
 dx(t)=[ax(t)+\bar{a}m(t)+bu(t)]dt+\sigma dB(t),\\
x(0)=x_0\in\dbR,
\end{array}
\right.
\end{equation}
where, $a,\bar{a}, b, \sigma$ are real numbers and $m$ is a deterministic function such that $|m|_T<\infty$. Then the following holds

\begin{lemma}\label{L1}
\begin{itemize}
\item[$(1)$] There exists a constant $C_T>0$ depending on $T, a,\bar{a}, b ,\sigma$ and $|m|_T$ such that
\begin{equation}\label{x-est}
E[|x|^2_T]\le C_T\left(1+|x(0)|^2+E\left[\int_0^T |u(t)|^2 dt\right]\right)
\end{equation}
\item[$(2)$] If $f$ is a Borel function from $[0,T]\times \dbR$ into $\dbR$ and is of linear growth i.e. there exists a constant $C>0$ such that $|f(t,x)|\le C(1+|x|)$, for any $(t,x)\in [0,T]\times \dbR$, then the process defined, for $0\le t\le T$, by 
\begin{equation}\label{expo-mg-1}
{\cal E}_t(x):=\exp{\left(\int_0^t f(s,x(s))dB(s)
  -\frac{1}{2}\int_0^t |f(s,x(s))|^2 ds\right)},
\end{equation}
is an $\dbF$-martingale. \\ In particular,
\begin{equation}\label{expo-mag-2}
E\left[{\cal E}_T(x)\right]=1.
\end{equation}
\end{itemize}
\end{lemma}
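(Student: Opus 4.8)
The plan is to prove the two parts separately, using standard SDE estimates for part (1) and a Novikov-type / linear-growth argument for part (2).

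\medskip
\textbf{Part (1): the second-moment estimate.} First I would write the explicit mild (variation-of-constants) representation of the solution,
\begin{equation*}
x(t)=e^{at}x_0+\int_0^t e^{a(t-s)}\big[\bar a\, m(s)+b\,u(s)\big]\,ds+\sigma\int_0^t e^{a(t-s)}\,dB(s).
\end{equation*}
Then, taking absolute values, using $|e^{a(t-s)}|\le e^{|a|T}$ on $[0,T]$, and applying the elementary inequality $(\sum_{i=1}^n y_i)^2\le n\sum_{i=1}^n y_i^2$ to the three terms, I would bound $|x(t)|^2$ by a constant (depending on $T,a$) times $|x_0|^2$, plus a term controlled by $\big(\int_0^t|\bar a\,m(s)+b\,u(s)|\,ds\big)^2$, plus $\sigma^2\big(\int_0^t e^{a(t-s)}dB(s)\big)^2$. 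For the drift term, Cauchy--Schwarz gives $\big(\int_0^t |\bar a m(s)+bu(s)|ds\big)^2\le 2T\big(\bar a^2 T|m|_T^2+b^2\int_0^T|u(s)|^2ds\big)$. For the stochastic term, I would take $\sup_{0\le t\le T}$ first and then expectation, applying Doob's maximal inequality (or the Burkholder--Davis--Gundy inequality) together with the Itô isometry to get $E\big[\sup_{t\le T}\big(\int_0^t e^{a(t-s)}dB(s)\big)^2\big]\le 4 e^{2|a|T}T$. Collecting the constants into a single $C_T$ depending on $T,a,\bar a,b,\sigma,|m|_T$ yields \eqref{x-est}. (Alternatively one can avoid the explicit solution and argue by Itô's formula applied to $|x(t)|^2$, then Gronwall; either route is routine.)

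\medskip
\textbf{Part (2): the exponential martingale.} The process $\cE_t(x)$ is the Doléans--Dade exponential of the local martingale $M_t:=\int_0^t f(s,x(s))\,dB(s)$, hence it is always a nonnegative local martingale and therefore a supermartingale; the content of the claim is that it is in fact a true martingale, equivalently that $E[\cE_T(x)]=1$. The standard sufficient condition is Novikov's criterion, $E\big[\exp\big(\tfrac12\int_0^T|f(s,x(s))|^2ds\big)\big]<\infty$, but this is not immediate here because the linear-growth bound only gives $|f(s,x(s))|^2\le 2C^2(1+|x(s)|^2)$ and $E[\exp(c\int_0^T|x(s)|^2 ds)]$ need not be finite for large $c$. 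The clean fix is a localization/time-slicing argument: partition $[0,T]$ into subintervals $0=t_0<t_1<\dots<t_N=T$ with mesh $\delta$ small enough that $C^2\delta$ is below the threshold in the conditional Novikov/Khasminskii estimate. On each piece one uses the Markov property of $x$ together with the Gaussian-type tail bound $\sup_{t}E\big[\exp\big(c\int_{t_i}^{t_{i+1}}|x(s)|^2ds\big)\,\big|\,\cF_{t_i}\big]<\infty$ for $c(t_{i+1}-t_i)$ small (this is where the boundedness of the coefficients and of $|m|_T$, via the estimate from part (1), is used); Novikov's criterion then applies on each subinterval to show $E[\cE_{t_{i+1}}(x)/\cE_{t_i}(x)\mid\cF_{t_i}]=1$, and chaining these conditional expectations gives $E[\cE_T(x)]=1$. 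The true-martingale property then follows since a nonnegative supermartingale with constant expectation is a martingale.

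\medskip
\textbf{Main obstacle.} Part (1) is entirely routine. The only real subtlety is in part (2): one cannot apply Novikov's criterion directly on $[0,T]$ because the exponential moment of $\int_0^T|x(s)|^2\,ds$ may blow up. The key step — and the place where care is needed — is the time-slicing argument together with the uniform conditional exponential-integrability estimate for $\int_{t_i}^{t_{i+1}}|x(s)|^2\,ds$ on short intervals; this relies on the linear growth of $f$, the boundedness of the SDE coefficients and of $m$, and a Gaussian tail estimate for the (sub-Gaussian) solution $x$.
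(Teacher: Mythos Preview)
Your Part~(1) is correct and essentially what the paper does: the paper invokes Gronwall and Burkholder--Davis--Gundy, which is precisely the alternative you mention at the end; your variation-of-constants route is an equally standard and valid argument.

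For Part~(2), the paper does not give an argument but simply cites Corollary~3.5.16 in Karatzas--Shreve, whose proof is indeed the time-slicing (piecewise Novikov) scheme you outline, so at the level of strategy you are aligned with the paper. However, the way you justify the crucial step is not correct. With an arbitrary adapted control $u\in\mathcal{U}$ the process $x$ is in general \emph{not} Markov and \emph{not} sub-Gaussian, so you cannot invoke ``the Markov property of $x$'' or a ``Gaussian tail estimate for the (sub-Gaussian) solution $x$''; and the estimate from Part~(1) is only a second-moment bound, which says nothing about the exponential integrability $E\big[\exp\big(c\int_{t_i}^{t_{i+1}}|x(s)|^2\,ds\big)\big]<\infty$ that piecewise Novikov requires. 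The argument behind the Karatzas--Shreve result does not go through moments of $x$ at all: it bounds the integrand \emph{pathwise} by a linear function of the running supremum $\sup_{s\le t}|B(s)|$ and then uses the exact Gaussian tails of that supremum on short intervals. To carry this out here you should use your variation-of-constants formula for $x$, integrate by parts to bound $\big|\sigma\int_0^t e^{a(t-s)}\,dB(s)\big|$ linearly in $\sup_{s\le t}|B(s)|$, and handle the deterministic $m$-term trivially; the only delicate point is the contribution $\int_0^t e^{a(t-s)}b\,u(s)\,ds$, which for a general square-integrable adapted $u$ need not have Gaussian tails, so this is where care (or an additional assumption on $u$, resp.\ on $U$) is genuinely needed.
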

\begin{proof} The first assertion follows from the Gronwall and the Burkholder-Davis-Gundy's inequalities. The second assertion follows from Corollary 3.5.16 in \cite{karatzas}.  

\end{proof}

\section{ Linear-quadratic mean-field game: risk-neutral case}
Consider a very large number of risk-neutral decision-makers. The best-response of a player is  the following   risk-neutral linear-quadratic mean-field problem where the mean-field term is through the mean of all the states of all the players:
\begin{eqnarray}
\label{LQ00game1} 
\left\{
\begin{array}{lll} \inf_{u(\cdot)\in \cU}  \frac{1}{2}E \left[q(T)x^2(T)+\bar{q}(T)[x(T)-m(T)]^2 \right. \\ \left.
\ \ \ \ \ +\int_0^T q(t) x^2(t)+\bar{q}(t) (x(t)-m(t))^2+r(t) u^2(t)dt \right],
\\
\displaystyle{\mbox{ subject to }\ }\\
dx(t)=[ax(t)+\bar{a}m(t)+bu(t)]dt+\sigma dB(t),\\
x(0)\in \mathbb{R},  \\ 
 \end{array}
\right.
\end{eqnarray}
where, $q(t)\geq 0, \bar{q}(t)\geq 0, r(t)>0,$ and  $a,\bar{a}, b, \sigma$ are real numbers  and where $m(t)$ is the mean state trajectory created by all players at an equilibrium (if it exists). 

Any $\bar u(\cdot)\in {\cal U}$ satisfying the infimum in (\ref{LQ00game1})
is called a risk-neutral best-response strategy of a generic player to the mean-field term $(m(t))_t$. The corresponding state process, solution of (\ref{LQ00game1}), is denoted by $\bar x(\cdot):=x^{\bar u}[m](\cdot).$

The risk-neutral mean-field equilibrium problem we are concerned with is to characterize the triplet $(\bar x,\bar u, m)$ solution of the  problem (\ref{LQ00game1}) and the mean state created by all the players coincide with $m,$ i.e.,
$$m(t)=E[x^{\bar u}[m](t)], $$ which is a fixed-point equation.
\begin{definition}
A  mean-field equilibrium problem is a collection $(\bar x,\bar u, m)$  such that $\bar{u}$ minimizes  (\ref{LQ00game1}) and $E[\bar x]=m.$
\end{definition}
\noindent Note that we  define a  mean-field equilibrium through the mean state and not the entire distribution itself.
\subsection{Determining the best-response of a player}
Let 
\begin{equation} \begin{array}{lll} 
L(u)= \frac{1}{2}[ q(T)x^2(T)+\bar{q}(T)[x(T)-m(T)]^2\\ \qquad\quad +\int_0^T \{ q(t) x^2+\bar{q}(t) (x(t)-m(t))^2+r(t) u^2\} dt].
\end{array}\end{equation} 
Then, the risk-neutral cost functional in (\ref{LQ00game1}) is $E[L(u)]$, the expected value fo $L.$

\noindent Let $f(t,x)=\frac{1}{2}\beta(t) x^2+\alpha(t) x+\gamma(t),$ where, $\alpha, \beta$ and $\gamma$ are deterministic function of time, such that 
$$
f(T,x)=\frac{1}{2} \left[ q(T)x^2+\bar{q}(T)[x-m(T)]^2\right],
$$
i.e. $\beta(T)=q(T)+\bar{q}(T),\,\, \alpha(T)= -\bar{q}(T)m(T),\,\, \gamma(T)= \frac{ \bar{q}(T)}{2}m^2(T).$

\noindent By applying It\^o's formula to $f(t,x(t))$, we have
\begin{eqnarray}&&f(T, x(T))-f(0, x(0)) \nonumber\\ \nonumber
 &=&\int_0^T [f_t(t,x(t))+(ax(t)+\bar{a}m(t)+bu(t))f_x(t,x(t))+\frac{\sigma^2}{2}f_{xx}(t,x(t))]dt \\ \nonumber &&+\int_0^T \sigma f_x (t,x(t))dB(t)\\ \nonumber
 &=&\int_0^T  \{\frac{\dot \beta}{2}(t)+a\beta(t) \} x^2(t)+(\dot{\alpha}(t)+a\alpha(t)+\bar{a}\beta(t) m(t))x(t) dt\\ \nonumber&& 
 +\int_0^T (\dot{\gamma}(t)+\bar{a}\alpha(t) m(t)+\frac{\sigma^2}{2}\beta(t)) +
  bu(\beta(t) x(t)+\alpha(t))
 dt \\ 
 &&+\int_0^T \sigma (\beta(t) x(t)+\alpha(t)) dB(t).
\end{eqnarray}
We evaluate $L(u)-\frac{1}{2}\beta(0) x^2(0)-\alpha(0) x(0)-\gamma(0):$ 
\begin{eqnarray} &&
L(u)-\frac{1}{2}\beta(0) x^2(0)-\alpha(0) x(0)-\gamma(0) \\ \nonumber
&=&  f(T, x(T))-f(0, x(0))\\  \nonumber
 &&+\frac{1}{2} \int_0^T  \{q(t) x^2(t)+\bar{q}(t)(x(t)-m(t))^2+r(t) u^2(t)\} dt\\ \nonumber
&=& \int_0^T  \{\frac{\dot \beta}{2}(t)+a\beta(t) \} x^2(t) dt\\  \nonumber
 && +\int_0^T  (\dot{\alpha}(t)+a\alpha(t)+\bar{a}\beta(t) m(t))x(t) dt\\  \nonumber
 && +\int_0^T  (\dot{\gamma}(t)+\bar{a}\alpha(t) m(t)+\frac{\sigma^2}{2}\beta(t))+
  bu(t)(\beta(t) x(t)+\alpha(t))dt \\ \nonumber
  &&
  +  \int_0^T [\frac{q(t)+\bar{q}(t)}{2} x^2(t)- \bar{q}(t) m(t) x(t)+ \frac{ \bar{q}(t)}{2}m^2(t)+\frac{r(t)}{2}u^2(t) ]\ dt
  \\ \nonumber &&
  +\int_0^T \sigma (\beta(t) x(t)+\alpha(t)) dB(t).
\end{eqnarray}
  
  Thus, 
  \begin{eqnarray} && L(u)-\frac{1}{2}\beta(0) x^2(0)-\alpha(0) x(0)-\gamma(0) \\ \nonumber
   &=& 
   \int_0^T  \{\frac{\dot \beta(t)}{2}+a\beta (t)+\frac{q(t)+\bar{q}(t)}{2}\} x^2 (t)dt\\  \nonumber &&+ \int_0^T(\dot{\alpha}(t)+a\alpha(t)+\bar{a}\beta(t) m(t)- \bar{q}(t) m(t) )x(t) dt\\ \nonumber &&
   +  \int_0^T (\dot{\gamma}(t)+\bar{a}\alpha(t) m(t)+\frac{\sigma^2}{2}\beta(t)+ \frac{ \bar{q}(t)}{2}m^2(t))
  dt \\ \nonumber
  &&
  +  \int_0^T [ bu(t)(\beta(t) x(t)+\alpha(t))+\frac{r(t)}{2} u^2 (t)]\ dt
  \\ &&
  +\int_0^T \sigma (\beta(t) x(t)+\alpha(t)) dB(t)
\end{eqnarray}

We now use the following simple square completion relation:
\begin{eqnarray} &&
 bu(\beta x+\alpha)+\frac{r}{2} u^2\\ \nonumber &=& \frac{r}{2}\left(  u+\frac{b}{r}(\beta x+\alpha)\right)^2-\frac{b^2}{2r}(\beta x+\alpha)^2\\ \nonumber
 &=& \frac{r}{2}\left(  u+\frac{b}{r}(\beta x+\alpha)\right)^2-
 \frac{b^2\beta^2}{2r}x^2-\frac{b^2\alpha\beta}{r}x-\frac{b^2\alpha^2}{2r}.
\end{eqnarray}
Hence,

\begin{eqnarray} &&
L(u)-\frac{1}{2}\beta(0) x^2(0)-\alpha(0) x(0)-\gamma(0)\\ \nonumber
&=&\int_0^T  \{\frac{\dot \beta(t)}{2}+a\beta(t) +\frac{q(t)+\bar{q}(t)}{2}-
 \frac{b^2\beta^2(t)}{2r(t)}\} x^2(t)\\ \nonumber &&+\int_0^T(\dot{\alpha}(t)+a\alpha(t)+\bar{a}\beta(t) m(t)- \bar{q(t)} m(t)-\frac{b^2\alpha(t)\beta(t)}{r(t)} )x(t) dt\\ \nonumber
 &&
   +\int (\dot{\gamma}(t)+\bar{a}\alpha(t) m(t)+\frac{\sigma^2}{2}\beta(t)+ \frac{ \bar{q}(t)}{2}m^2(t)-\frac{b^2\alpha^2(t)}{2r(t)}) 
  dt \\ \nonumber
  &&
  +  \int_0^T   \frac{r(t)}{2}\left[  u(t)+\frac{b}{r(t)}(\beta(t) x(t)+\alpha(t))\right]^2\ dt
  \\ &&
  +\int_0^T \sigma (\beta(t) x(t)+\alpha(t)) dB(t). \label{lbterm}
\end{eqnarray}

Since the expected value of the last term (\ref{lbterm}) is zero and $r>0$, we have
\begin{equation} \label{rn}
E[L(u)]\ge \frac{1}{2}\beta(0) x^2(0)+\alpha(0) x(0)+\gamma(0),\,\,\, u\in\mathcal{U},
\end{equation}
 if and only if
\begin{equation} \label{rn-1}
 \left\{\begin{array}{lll}
\dot \beta(t)+2a\beta(t) -\frac{b^2}{r(t)}\beta^2(t)+q(t)+\bar{q}(t)=0,\\
\beta(T)=q(T)+\bar{q}(T)\geq 0,\\
\dot{\alpha}(t)+a\alpha(t)+(\bar{a}\beta(t) - \bar{q}(t)) m(t)-\frac{b^2}{r(t)}\alpha(t)\beta(t) =0,\\
\alpha(T)= -\bar{q}(T)m(T),\\
\dot{\gamma}(t)+\bar{a}\alpha(t) m(t)+\frac{\sigma^2}{2}\beta(t)+ \frac{ \bar{q}(t)}{2}m^2(t)-\frac{b^2}{2r(t)}\alpha^2(t)=0,\\
\gamma(T)= \frac{ \bar{q}(T)}{2}m^2(T).
\end{array}\right.
\end{equation}
with equality if and only if
\begin{equation*}
 u(t)=-\frac{b}{r(t)}(\beta(t) x(t)+\alpha(t)).
\end{equation*}
Since $b>0, r>0$ by assumption, the risk-neutral Riccati equation in (\ref{rn-1}), has a unique positive solution $\beta.$ Incorporating it into the equation satisfied by $\alpha$ one gets a solution $\alpha(m)$ by direct integration. Thus, the unique best-response strategy is 
\begin{equation}\label{rn-optimal}
 \bar u(t)=-\frac{b}{r(t)}(\beta(t) \bar x(t)+\alpha[m](t)),
\end{equation}
where, $\alpha ,\beta$ are solutions of (\ref{rn-1}). Moreover, the associated performance is
\begin{equation}\label{rn-cost}
E[L(\bar u)]= \frac{1}{2}\beta(0) x^2(0)+\alpha(0) x(0)+\gamma(0).
\end{equation}

\subsection{Risk-Neutral Mean-Field Equilibrium}
We now look for an equilibrium of the risk-neutral linear-quadratic game, which we call risk-neutral mean-field equilibrium.
If $(\bar x, \bar u, m)$ is a mean-field equilibrium then $m$ solves the following fixed-point equation:
\begin{equation}\label{rn-FP}
m(t)=m_0+\int_0^t [(a+\bar{a})m(s)-\frac{b^2}{r(s)} (\beta(s) m(s)+\alpha[m](s))]\ ds=:\Phi[m](t),
\end{equation}
which can be rewritten as $m=\Phi[m].$  
The Banach fixed-point theorem (also known as the contraction mapping theorem) is an important tool in the theory of complete metric spaces. It guarantees the existence and uniqueness of fixed points of certain self-maps of complete metric spaces, and provides a constructive method to find those fixed points by Banach-Picard iterates.

From the inequality in Lemma \ref{L1}, we deduce that the operator $\Phi$ maps ${L}^k(0,T;\dbR)$ into itself, and  ${L}^k(0,T;\dbR)$ is a complete metric space.  Therefore, a simple sufficient condition for having a unique  fixed-point of $\Phi$  is given by a strict contraction of $\Phi,$ which is ensured if its Lipschitz constant $L_{\Phi}$ is strictly less than one.

\noindent A standard Gronwall inequality yields 
$$
L_{\Phi} < T\left[ g_{1}+\tilde g_{1}(\bar{q}_T+\epsilon_1 e^{T | a-b^2\beta/r|_T}) \right]. 
$$ 
where 
\begin{equation}\label{FP-notation}
g_{1}=| a+\bar{a}-b^2\beta/r|_T,\,\,\;  \tilde{g}_{1}= |b^2/r|_T\,\,\;  \epsilon_1=|\bar a\beta-\bar q |_T.
\end{equation}

\medskip\noindent  Thus, we have proved the following 
\begin{proposition}
Suppose that $r>0, s>0, q+\bar{q}>0 $  then there exists a unique best response strategy $\bar u=-\frac{b}{r}(\beta x+\alpha) ,$ 
where $\alpha$ and $\beta$ solve the Riccati equations (\ref{rn-1}).

\noindent In addition, if $T\left[ g_{1}+\tilde g_{1}(\bar{q}_T+\epsilon_1 e^{T | a-b^2\beta/r|_T}) \right] < 1$ then there is a unique risk-neutral mean-field equilibrium.
\end{proposition}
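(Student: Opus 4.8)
The plan is to assemble the statement from the square-completion identity already derived above together with a Banach fixed-point argument for the operator $\Phi$ of (\ref{rn-FP}). Most of the work for the first assertion is in fact contained in the derivation of (\ref{lbterm})--(\ref{rn-cost}), so what remains is to solve the auxiliary ordinary differential equations in (\ref{rn-1}), verify that the candidate feedback is admissible, and then estimate the Lipschitz constant of $\Phi$.

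For the first assertion I would proceed in three steps. First, the scalar Riccati equation for $\beta$ in (\ref{rn-1}), integrated backward from $\beta(T)=q(T)+\bar q(T)\ge 0$, has a unique local solution by Cauchy--Lipschitz since its right-hand side is a polynomial in $\beta$, hence locally Lipschitz; the a priori bounds coming from the sign assumptions ($q+\bar q>0$, $r>0$, $b\ne 0$) --- the quadratic term $-\frac{b^2}{r}\beta^2$ prevents blow-up while the nonnegativity of $q+\bar q$ keeps $\beta\ge 0$ --- extend this solution to all of $[0,T]$. Second, with $\beta$ fixed, the $\alpha$-equation in (\ref{rn-1}) is \emph{linear} in $\alpha$ with bounded coefficients and source term $(\bar a\beta-\bar q)m$, hence admits a unique solution $\alpha[m]$ via the variation-of-constants formula, and $\gamma$ is then obtained by one further direct integration. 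Third, I would invoke the identity leading to (\ref{lbterm}): since $r>0$ and, by Lemma \ref{L1}(1), $\sigma(\beta x+\alpha)\in\mathcal{L}^2_{\dbF}(0,T;\dbR)$ so the It\^o integral in (\ref{lbterm}) is a true martingale of zero mean, taking expectations yields (\ref{rn}), with equality precisely for the feedback $u(t)=-\frac{b}{r(t)}(\beta(t)x(t)+\alpha(t))$. This candidate is admissible: the closed-loop equation is a linear SDE, hence has a unique strong solution $\bar x$, and $\bar u=-\frac{b}{r}(\beta\bar x+\alpha)$ is square-integrable by the estimate (\ref{x-est}). This gives existence and uniqueness of the best response together with the cost formula (\ref{rn-cost}).

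For the second assertion I would first record the fixed-point equation: taking expectations in $d\bar x=[a\bar x+\bar a m+b\bar u]\,dt+\sigma\,dB$ with $\bar u=-\frac{b}{r}(\beta\bar x+\alpha[m])$ and imposing $m=E[\bar x]$ yields exactly $m=\Phi[m]$ with $\Phi$ as in (\ref{rn-FP}). By Lemma \ref{L1} (or direct inspection) $\Phi$ maps the complete space $L^k(0,T;\dbR)$ into itself, so by the contraction mapping theorem it suffices to prove that $\Phi$ is a strict contraction under the stated hypothesis. For this I would: (i) apply Gronwall's inequality to the linear $\alpha$-equation to bound $|\alpha[m_1]-\alpha[m_2]|_T$, where the $m$-dependent terminal value $\alpha(T)=-\bar q(T)m(T)$ contributes the $\bar q_T$ term and the source difference $(\bar a\beta-\bar q)(m_1-m_2)$ contributes the factor $\epsilon_1 e^{T|a-b^2\beta/r|_T}$ with $\epsilon_1=|\bar a\beta-\bar q|_T$; and (ii) subtract the two copies of $\Phi$, take the supremum over $[0,T]$ and use that the integration interval has length at most $T$, which gives $|\Phi[m_1]-\Phi[m_2]|_T\le T\big[g_1+\tilde g_1(\bar q_T+\epsilon_1 e^{T|a-b^2\beta/r|_T})\big]|m_1-m_2|_T$ with $g_1,\tilde g_1$ as in (\ref{FP-notation}). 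Under the displayed inequality this constant is strictly less than one, so $\Phi$ has a unique fixed point $m$, and $(\bar x,\bar u,m)$ is the unique risk-neutral mean-field equilibrium.

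The routine parts are the ODE solvability and the Gronwall/supremum bookkeeping; the one genuinely delicate point is step (i), namely capturing the Lipschitz dependence of $\alpha[m]$ on $m$ --- handling \emph{both} the $m$-dependent terminal condition and the $m$-dependent source term --- sharply enough that the resulting contraction constant matches exactly the one displayed in the statement. Everything else is a direct assembly of the computation preceding the proposition.
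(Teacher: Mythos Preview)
Your proposal is correct and follows essentially the same route as the paper: square completion to reduce to the Riccati system (\ref{rn-1}), followed by the Banach fixed-point argument for $\Phi$ via a Gronwall estimate on $\alpha[m]$. The paper is terser --- it simply asserts that the Riccati equation has a unique positive solution and that ``a standard Gronwall inequality'' yields the displayed Lipschitz bound --- whereas you spell out the admissibility check for $\bar u$ and the two-step breakdown of the Lipschitz estimate (terminal value plus source term), but there is no substantive difference in method.
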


We now refine the equation satisfied by $\alpha$ by setting $\alpha=\eta m.$
\begin{equation} \label{rn-1refined}
 \left\{\begin{array}{lll}
\dot{\alpha}(t)+a\alpha(t)+(\bar{a}\beta(t) - \bar{q}(t)) m(t)-\frac{b^2}{r(t)}\alpha(t)\beta(t) =0,\\
\alpha(T)= -\bar{q}(T)m(T).
\end{array}\right.
\end{equation}
Then, 
\begin{equation} \label{rn-1refined}
 \left\{\begin{array}{lll}
\dot{\eta}+[2a+\bar{a}-2\frac{b^2}{r}\beta]\eta-\frac{b^2}{r}\eta^2+(\bar{a}\beta-\bar{q})=0,\\
\eta(T)= -\bar{q}(T).
\end{array}\right.
\end{equation}
\subsubsection*{Closed-Form Expression of Mean-Field Equilibrium}
Whenever $\alpha$ and $\beta$ does not blow-up within $[0,T],$ the explicit mean-field equilibrium is given by
\begin{equation} \label{rn-1refined2}
 \left\{\begin{array}{lll}
m(t)=m(0) e^{\int_0^t (a+\bar{a}-\frac{b^2}{r}(\beta+\eta))dt}\\
\dot \beta(t)+2a\beta(t) -\frac{b^2}{r(t)}\beta^2(t)+q(t)+\bar{q}(t)=0,\\
\beta(T)=q(T)+\bar{q}(T),\\ \dot{\eta}(t)+[2a+\bar{a}-2\frac{b^2}{r(t)}\beta(t)]\eta(t)-\frac{b^2}{r(t)}\eta^2(t)+(\bar{a}\beta(t)-\bar{q}(t))=0,\\
\eta(T)= -\bar{q}(T)\\
\bar{u}=-\frac{b}{r(t)}(\beta(t) x(t)+\eta(t) m(t))
\end{array}\right.
\end{equation}

\section{ Linear exponential-quadratic mean-field game: risk-sensitive case} \label{sec:risk}
We now consider a very large number of risk-sensitive decision-makers. While the risk-sensitive Linear Quadratic Gaussian optimal control have been widely investigated in the literature starting from Jacobson 1973 \cite{Jacobson73}, the mean-field game version of the problem has been introduced only recently \cite{TAC2014}.

The best-response of a player is  the following risk-sensitive linear-quadratic mean-field problem where the mean-field term is through the mean of all the states of all the players:
\begin{eqnarray}
\label{LQ00gamers} 
\left\{
\begin{array}{lll} \inf_{u(\cdot)\in \cU}  E e^{\theta L(u)},
\\
\displaystyle{\mbox{ subject to }\ }\\
dx(t)=[ax(t)+\bar{a}m(t)+bu(t)]dt+\sigma dB(t),\\
x(0)\in \mathbb{R}.\\ 
 \end{array}
\right.
\end{eqnarray}
Similarly as above, any $\bar u(\cdot)\in {\cal U}$ satisfying the infimum in (\ref{LQ00gamers})
is called a risk-sensitive best-response of a generic player to the mean-field term $(m(t))_t$. The corresponding state process, solution of (\ref{LQ00gamers}), is denoted by $\bar x(\cdot):=x^{\bar u}[m](\cdot).$

The risk-sensitive mean-field equilibrium problem we are concerned with is to characterize the triplet $(\bar x,\bar u, m)$ solution of the  problem (\ref{LQ00gamers}) and the state created by all the players coincide with $m,$ i.e.,
$$m(t)=E[x^{\bar u}[m](t)], $$ which is a  fixed-point equation.

\medskip\noindent We complete with the term  $-\frac{1}{2}\int_0^T \theta^2 \sigma^2 (\beta(t) x(t)+\alpha(t))^2 dt
  +\theta \int_0^T [\frac{1}{2} \theta \sigma^2] (\beta(t) x(t)+\alpha(t))^2 dt$ in $L(u)$ to obtain
\begin{eqnarray} &&
\theta[L(u)-\frac{1}{2}\beta(0) x^2(0)-\alpha(0) x(0)-\gamma(0)]\\ \nonumber 
  &=&
  \theta\int_0^T  \{\frac{\dot \beta}{2}(t)+a\beta(t) +\frac{q(t)+\bar{q}(t)}{2}+(-
 \frac{b^2}{2r(t)}+\frac{1}{2}\theta \sigma^2)\beta^2(t)\} x^2(t) dt\\ \nonumber  &&
 +\theta\int_0^T (\dot{\alpha}(t)+a\alpha(t)+\bar{a}\beta(t) m(t)- \bar{q}(t) m(t)+(-\frac{b^2}{r(t)} +\theta \sigma^2(t))\alpha(t)\beta(t))x(t) dt\\ \nonumber 
 &&
   +\theta \int (\dot{\gamma}(t)+\bar{a}\alpha(t) m(t)+\frac{\sigma^2}{2}\beta(t)+ \frac{ \bar{q}(t)}{2}m^2(t)-\frac{b^2\alpha^2(t)}{2r(t)}+\frac{1}{2}\theta \sigma^2 \alpha^2(t)) 
  dt \\ \nonumber 
  &&
  +  \theta \int_0^T   \frac{r}{2}\left[  u(t)+\frac{b}{r(t)}(\beta(t) x(t)+\alpha(t))\right]^2\ dt
  \\ &&
  +\int_0^T \theta \sigma (\beta(t) x(t)+\alpha(t)) dB(t)
  -\frac{1}{2}\int_0^T \theta^2 \sigma^2 (\beta(t) x(t)+\alpha(t))^2 dt.
\end{eqnarray}
Taking the exponential and the expectation yields 
\begin{equation}\begin{array}{lll}
\mathbb{E}e^{\theta[L(u)-\frac{1}{2}\beta(0) x^2(0)-\alpha(0) x(0)-\gamma(0)]}=
\mathbb{E}\left[{\cal E}_T(x)e^{ \theta \int_0^T \frac{r(t)}{2}\left[  u(t)+\frac{b}{r(t)}(\beta(t) x(t)+\alpha(t))\right]^2  dt}\right]\\ \qquad\qquad\qquad\qquad\qquad\qquad\qquad
    \geq \mathbb{E}\left[{\cal E}_T(x)\right] \end{array}
\end{equation} 
 if and only if $(\beta,\alpha,\gamma)$ satisfies the following system of equations, where we call the equation satisfied by $\beta$ the risk-sensitive Riccati equation:
\begin{equation} \label{rs-1} 
\left\{\begin{array}{ll}
\dot \beta(t)+2a\beta(t)-(
 \frac{b^2}{r(t)}-\theta \sigma^2)\beta^2(t)+q(t)+\bar{q}(t)=0\\
\dot{\alpha}(t)+a\alpha(t)+(\bar{a}\beta(t) - \bar{q}(t) ) m(t)+(-\frac{b^2}{r(t)} +\theta \sigma^2)\alpha(t)\beta(t)=0\\
\dot{\gamma}(t)+\bar{a}\alpha(t) m(t)+\frac{\sigma^2}{2}\beta(t)+ \frac{ \bar{q}(t)}{2}m^2(t)-\frac{b^2\alpha^2(t)}{2r(t)}+\frac{\theta \sigma^2}{2} \alpha^2(t)=0,
\\
\gamma(T)= \frac{ \bar{q}(T)}{2}m^2(T),
\end{array}\right.
\end{equation}
with equality if and only if
\begin{equation}\label{rs-best}
\bar u(t)=-\frac{b}{r(t)}(\beta(t) \bar x(t)+\alpha(t)),
\end{equation}
where, $\bar x$ solves the dynamics in (\ref{LQ00gamers}) associated with $\bar u$, and
\begin{equation*}\label{girsanov}
{\cal E}_t(x):=\exp{\left(\int_0^t \theta \sigma (\beta(s) x(s)+\alpha(s)) dB(s)
  -\frac{1}{2}\int_0^t \theta^2 \sigma^2 (\beta(s) x(s)+\alpha(s))^2 ds\right)}.
\end{equation*}
Now, the risk-sensitive Riccati equation in $\beta$ above has a unique positive solution  if $\frac{b^2}{r}-\theta \sigma^2>0,$ i.e., $\theta < \frac{b^2}{r\sigma^2}$, in which case, in view of Lemma \ref{L1}, we have 
$$
E[{\cal E}_T(x)]=1,\quad u\in\mathcal{U}.
$$
Hence
$$
\mathbb{E}e^{\theta L(u)}\ge e^{\theta \left[\frac{1}{2}\beta(0) x^2(0)+\alpha(0) x(0)+\gamma(0)\right]},\,\, u\in\mathcal{U},
$$
with equality for $u=\bar u$, which constitutes the unique best response to the mean-field process. 

\noindent Hence, the  cost functional associated to the best response $\bar u$ given by (\ref{rs-best}) is 
\begin{equation}
\mathbb{E}e^{\theta L(\bar u)}=e^{\theta\left(\frac{1}{2}\beta(0) x^2(0)+\alpha(0) x(0)+\gamma(0)\right)}.
\end{equation}

\medskip\noindent Note that the presence of the term $\theta \sigma^2$ in the risk-sensitive Riccati equation (\ref{rs-1}) comes from the completion of the exponential martingale. One gets the risk-neutral Riccati equation as $\theta$ vanishes.

\noindent For $T$ and $\theta$ sufficiently small enough, the risk-sensitive mean-field game is completely solvable. Note that, however that for large $\theta,$ the solution $\beta$ may blow-up in finite time and  the control $-\frac{b}{r}(\beta x+\alpha)$ becomes  non-admissible.

\medskip\noindent Thus, we have proved the following 
\begin{proposition} Consider the linear-quadratic risk-sensitive mean-field game associated with (\ref{LQ00gamers}).
Suppose that $r>0, s>0, q+\bar q>0$  and $\theta < \frac{b^2}{r\sigma^2}$ then there exists a unique best response strategy $\bar u=-\frac{b}{r}(\beta x+\alpha),$ 
where $\alpha$ and $\beta$ solves the risk-sensitive Riccati equation (\ref{rs-1}) .

 In addition, if $T\left[ g_{2}+\tilde g_{2}(\bar{q}_T+\epsilon_2 e^{T | a+(-b^2/r+\theta\sigma^2)\beta |_{T}}) \right] < 1$ then there is a unique robust  risk-sensitive mean-field equilibrium, where 
$g_{2}=| a+\bar{a}+(-b^2/r)\beta |_{T},\ \tilde{g}_{2}=|  -b^2/r|_T,\  \epsilon_2=|\bar a\beta-\bar q |_{T},$ which are modification of $g_{1}, \tilde{g}_{1}$ and $\epsilon_1$  given by (\ref{FP-notation}).

\end{proposition}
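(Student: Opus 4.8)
The plan is to transcribe the square-completion and change-of-measure argument already carried out for the risk-neutral problem, being careful at the two places where the risk-sensitivity index $\theta$ enters. First I would take the same quadratic ansatz $f(t,x)=\tfrac12\beta(t)x^2+\alpha(t)x+\gamma(t)$ with terminal data $\beta(T)=q(T)+\bar q(T)$, $\alpha(T)=-\bar q(T)m(T)$, $\gamma(T)=\tfrac{\bar q(T)}{2}m^2(T)$, apply It\^o's formula to $f(t,x(t))$, and complete the square via $bu(\beta x+\alpha)+\tfrac r2 u^2=\tfrac r2\bigl(u+\tfrac br(\beta x+\alpha)\bigr)^2-\tfrac{b^2}{2r}(\beta x+\alpha)^2$. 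The new ingredient is to add and subtract $\tfrac12\int_0^T\theta^2\sigma^2(\beta x+\alpha)^2\,dt$, so that the It\^o integral $\int_0^T\theta\sigma(\beta x+\alpha)\,dB$ and its quadratic compensator combine into the exponent of ${\cal E}_T(x)$. Matching the coefficients of $x^2$, $x$ and $1$ to zero then forces exactly the system (\ref{rs-1}), in which the Riccati equation for $\beta$ carries the extra term $\theta\sigma^2\beta^2$.

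The first substantive point is global solvability of the $\beta$-equation on $[0,T]$. Under $\theta\sigma^2<b^2/r$ the coefficient of $\beta^2$, namely $-(b^2/r-\theta\sigma^2)$, is strictly negative, so this is a Riccati equation of the same dissipative sign as in the risk-neutral case; with nonnegative terminal value $q(T)+\bar q(T)$ and nonnegative forcing $q+\bar q$ it admits a unique nonnegative solution that does not blow up on $[0,T]$, with an a priori bound obtained by comparison with the linear equation got by dropping the quadratic term. With $\beta$ bounded, the $\alpha$-equation is linear in $\alpha$ with $t$-bounded coefficients and forcing linear in $m$, so $\alpha=\alpha[m]$ and then $\gamma=\gamma[m]$ follow by direct integration and are bounded. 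Hence $f(s,x)=\theta\sigma(\beta(s)x+\alpha(s))$ is of linear growth, Lemma \ref{L1}(2) applies, and ${\cal E}_t(x)$ is a true $\dbF$-martingale with $E[{\cal E}_T(x)]=1$ for every $u\in\mathcal U$. Taking exponentials and expectations in the identity above yields $\mathbb{E}e^{\theta L(u)}\ge e^{\theta(\frac12\beta(0)x^2(0)+\alpha(0)x(0)+\gamma(0))}$ for all $u\in\mathcal U$, with equality exactly when the nonnegative term $\theta\int_0^T\tfrac{r}{2}\bigl(u+\tfrac br(\beta x+\alpha)\bigr)^2\,dt$ vanishes, i.e. $u=\bar u=-\tfrac br(\beta\bar x+\alpha)$ where $\bar x$ solves the (well-posed, linear, bounded-coefficient) closed-loop SDE; admissibility of $\bar u$ comes from Lemma \ref{L1}(1). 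This gives the first assertion.

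For the equilibrium I would take expectations in the closed-loop dynamics $d\bar x=[a\bar x+\bar a m+b\bar u]\,dt+\sigma\,dB$ with $\bar u=-\tfrac br(\beta\bar x+\alpha[m])$; setting $\alpha=\eta m$ as in the risk-neutral refinement reduces everything to a scalar fixed-point equation $m=\Phi[m]$ of the form (\ref{rn-FP}), but with $b^2/r$ replaced by $b^2/r-\theta\sigma^2$ in the $\eta$-equation. Lemma \ref{L1}(1) shows $\Phi$ maps $L^k(0,T;\dbR)$ into itself, and a standard Gronwall estimate — in which the integrating factor of the linear $\alpha$-equation produces the factor $e^{T|a+(-b^2/r+\theta\sigma^2)\beta|_T}$ — bounds the Lipschitz constant of $\Phi$ by $T\bigl[g_2+\tilde g_2\bigl(\bar q_T+\epsilon_2 e^{T|a+(-b^2/r+\theta\sigma^2)\beta|_T}\bigr)\bigr]$ with $g_2,\tilde g_2,\epsilon_2$ as stated. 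When this is $<1$ the Banach fixed-point theorem gives a unique $m$, hence a unique risk-sensitive mean-field equilibrium.

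I expect the only genuinely delicate step to be the no-blow-up of the risk-sensitive Riccati solution: this is precisely where the sign hypothesis $\theta\sigma^2<b^2/r$ is essential, and it is what the remark following (\ref{rs-1}) about blow-up for large $\theta$ anticipates. Everything else is a line-by-line adaptation of the risk-neutral argument, with the Girsanov martingale ${\cal E}_t(x)$ taking over the role the plain stochastic integral played there.
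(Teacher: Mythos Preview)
Your proposal is correct and follows essentially the same route as the paper: the same quadratic ansatz and It\^o expansion, the same add-and-subtract of $\tfrac12\theta^2\sigma^2(\beta x+\alpha)^2$ to produce the Girsanov density ${\cal E}_T(x)$, the same appeal to Lemma~\ref{L1} for $E[{\cal E}_T(x)]=1$ once $\theta\sigma^2<b^2/r$ guarantees a bounded nonnegative Riccati solution, and the same Gronwall/Banach contraction for the fixed point with the constants modified as stated. Your write-up is in fact slightly more explicit than the paper's on the no-blow-up comparison argument and on admissibility of $\bar u$, but the method is identical.
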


\section{ Robust mean-field game: the risk-neutral case} \label{sec:robust}

Consider a very large number of risk-neutral players and a malicious/disturbance term $v$. We refer to \cite{robust} for an interesting application of robust mean-field games to crowd seeking problems in social networks. We assume that the disturbance term $v$ takes values a subset $V$ of $\dbR$ and define the set of disturbance strategies  $\mathcal{V}$ in a similar fashion as $\mathcal{U}$. Next, we formulate a  risk-neutral robust mean-field game as a minmax mean-field game as follows.

\noindent Let
\begin{equation}\label{r-rs} 
\begin{array}{lll}
L_2(u,v)= \frac{1}{2}\int_0^T \{ q(t)x^2(t)+\bar{q} (t)(x(t)-m(t))^2+r(t) u^2(t)-s(t) v^2(t)\} dt \\ \qquad\quad +\frac{1}{2}\left[q(T)x^2(T)+\bar{q}(T)[x(T)-m(T)]^2\right]
\end{array}
\end{equation} 
be the  cost functional of a generic player with strategy $u$ under disturbance $v$ when the mean-field process is $m.$

The best-response of a generic player is  the following   risk-neutral linear-quadratic robust mean-field problem under worst case disturbance  is 
\begin{eqnarray}
\label{LQ00game2} 
\left\{
\begin{array}{lll} \inf_{u(\cdot)\in \cU} \sup_{v(\cdot)\in \cV} E \left[ L_2(u,v)\right],
\\
\displaystyle{\mbox{ subject to }\ }\\
dx(t)=[ax+\bar{a}m(t)+bu(t)+c v(t)]dt+\sigma dB(t),\\
x(0)\in \mathbb{R},\\ 
 \end{array}
\right.
\end{eqnarray}
where, $q(t)\geq 0, \bar{q}(t)\geq 0, r(t)>0,$ and  $a,\bar{a}, b, \sigma$ are real numbers  and where $m(t)$ is the mean state trajectory created by all players at robust equilibrium (if it exists). 

Any $(\bar u(\cdot), \bar v(\cdot))\in {\cal U}\times {\cal V}$ satisfying the min-max in (\ref{LQ00game2})
is called a risk-neutral robust best-response of a generic player to the mean-field process $(m(t))_t$ under worst case disturbance $\bar v$. The corresponding state process, solution of (\ref{LQ00game2}), is denoted by $\bar x(\cdot):=x^{\bar u, \bar v}[m](\cdot).$

The robust mean-field equilibrium problem we are concerned with is to characterize the collection $(\bar x,\bar u, \bar v, m)$ solution of the  problem (\ref{LQ00game2}) and the mean state created by all the players coincide with $m,$ i.e.,
$$m(t)=E[x^{\bar u, \bar v}[m](t)], $$ which is a fixed-point equation.
\begin{definition}
A  robust mean-field equilibrium problem is a collection $(\bar x,\bar u, \bar v, m)$  such that $\bar{u}$ minimizes  (\ref{LQ00game2}) under worst disturbance $\bar v$ and $E[\bar x]=m.$
\end{definition}

At a robust mean-field equilibrium, the best-response to the mean-field under worst case disturbance,  should reproduce  the mean-field  itself.

\subsection{Determining the robust best-response of a player}
The terminal cost is similar as above but now It\^o's formula to  the function $f$ is  different because of the disturbance.
\noindent Let $f(t,x)=\frac{1}{2}\beta(t) x^2(t)+\alpha(t) x(t)+\gamma(t).$ By applying It\^o's formula, we have
\begin{eqnarray}&&f(T, x(T))-f(0, x(0)) \nonumber \\  \nonumber 
 &=&\int_0^T [f_t(t,x(t))+(ax(t)+\bar{a}m(t)+bu(t)+cv(t))f_x(t,x(t))+\frac{\sigma^2}{2}f_{xx}(t,x(t))]dt \\ \nonumber && +\int_0^T \sigma f_x (t,x(t))dB(t)\\ \nonumber 
 &=&\int_0^T  \{\frac{\dot \beta(t)}{2}+a\beta(t) \} x^2(t)+(\dot{\alpha}(t)+a\alpha(t)+\bar{a}\beta(t) m(t))x(t) dt\\ \nonumber  &&
 +\int_0^T (\dot{\gamma}(t)+\bar{a}\alpha(t) m(t)+\frac{\sigma^2}{2}\beta(t)) +
  (bu(t)+cv(t))(\beta(t) x(t)+\alpha(t))
 dt \\ && +\int_0^T \sigma (\beta(t) x(t)+\alpha(t)) dB(t).
\end{eqnarray}
We now compute the difference 
$L_2(u,v)-\frac{1}{2}\beta(0) x^2(0)-\alpha(0) x(0)-\gamma(0).$

\noindent We have
  \begin{equation} \begin{array}{lll} 
L_2(u,v)-\frac{1}{2}\beta(0) x^2(0)-\alpha(0) x(0)-\gamma(0)= 
   \int_0^T \{\frac{\dot \beta(t)}{2}+a\beta(t) +\frac{q(t)+\bar{q}(t)}{2}\} x^2(t)\ dt \\ \qquad\qquad \qquad   +\int_0^T (\dot{\alpha}(t)+a\alpha(t)+\bar{a}\beta(t) m(t)- \bar{q}(t) m(t) )x(t) dt\\ \qquad\qquad\qquad  
   +  \int_0^T (\dot{\gamma}(t)+\bar{a}\alpha(t) m(t)+\frac{\sigma^2}{2}\beta(t)+ \frac{ \bar{q}(t)}{2}m^2(t))
  dt \\  \qquad\qquad\qquad  
  +  \int_0^T [ (bu(t)+cv(t))(\beta(t) x(t)+\alpha(t))+\frac{r(t)}{2} u^2(t)-\frac{s(t)}{2} v^2 (t)]\ dt
  \\ \qquad\qquad\qquad  
  +\int_0^T \sigma (\beta(t) x(t)+\alpha(t)) dB(t).
\end{array}
\end{equation}
We now use the following relations:
\begin{eqnarray*} 
   bu(\beta x+\alpha)+\frac{r}{2} u^2= \frac{r}{2}\left(  u+\frac{b}{r}(\beta x+\alpha)\right)^2 -
 \frac{b^2\beta^2}{2r}x^2-\frac{b^2\alpha\beta}{r}x-\frac{b^2\alpha^2}{2r}
\end{eqnarray*}
and
\begin{eqnarray*} 
 cv(\beta x+\alpha)-\frac{s}{2} v^2= -\frac{s}{2}\left(  v-\frac{c}{s}(\beta x+\alpha)\right)^2+
 \frac{c^2\beta^2}{2s}x^2-\frac{c^2\alpha\beta}{r}x-\frac{c^2\alpha^2}{2s}
 \end{eqnarray*}
 to obtain
\begin{eqnarray} &&
L_2(u,v)-\frac{1}{2}\beta(0) x^2(0)-\alpha(0) x(0)-\gamma(0)  \nonumber \\ \nonumber
&=&\int_0^T  \{\frac{\dot \beta(t)}{2}+a\beta(t) +\frac{q(t)+\bar{q}(t)}{2}+(-
 \frac{b^2}{2r(t)}+ \frac{c^2}{2s(t)})\beta^2(t)\} x^2(t)\\  \nonumber &+&\int_0^T (\dot{\alpha}(t)+a\alpha(t)+\bar{a}\beta(t) m(t)- \bar{q}(t) m(t)+(-\frac{b^2}{r(t)}+\frac{c^2}{s(t)})\alpha(t)\beta(t) )x(t) dt\\ \nonumber 
 & +&
\int (\dot{\gamma}(t)+\bar{a}\alpha(t) m(t)+\frac{\sigma^2}{2}\beta(t)+ \frac{ \bar{q}(t)}{2}m^2(t)+(-\frac{b^2}{2r(t)}+\frac{c^2}{2s(t)}) \alpha^2(t)) \nonumber 
  dt \\  \nonumber 
  & +&
   \int_0^T   \frac{r(t)}{2}\left[  u(t)+\frac{b}{r(t)}(\beta(t) x(t)+\alpha(t))\right]^2-\frac{s(t)}{2}\left[  v(t)-\frac{c}{s(t)}(\beta(t) x(t)+\alpha(t))\right]^2\ dt
  \\   &+&
  \int_0^T \sigma (\beta(t) x(t)+\alpha(t)) dB(t).
\end{eqnarray}
Therefore,
\begin{equation} \label{r-rn}
 E[L_2(u,v)]\ge\frac{1}{2}\beta(0) x^2(0)+\alpha(0) x(0)+\gamma(0),\,\, (u,v)\in\mathcal{U}\times\mathcal{V},
\end{equation}
if and only if  $(\alpha, \beta, \gamma)$ solves the following system of equations where we call the equation $\beta$ solves, 'robust' Riccati equation:
\begin{equation}\label{r-rn-1}\left\{\begin{array}{lll}
\dot \beta(t)+2a\beta(t) +(-\frac{b^2}{r(t)}+ \frac{c^2}{s(t)})\beta^2(t)+q(t)+\bar{q}(t)=0,\\
\beta(T)=q(T)+\bar{q}(T)\geq 0,\\
\dot{\alpha}(t)+a\alpha(t)+(\bar{a}\beta(t)- \bar{q}(t)) m(t)+(-\frac{b^2}{r(t)}+\frac{c^2}{s(t)})\alpha(t)\beta(t)=0,\\
\alpha(T)= -\bar{q}(T)m(T),\\
\dot{\gamma}(t)+\bar{a}\alpha(t) m(t)+\frac{\sigma^2}{2}\beta (t)+ \frac{ \bar{q}(t)}{2}m^2(t)+(-\frac{b^2}{2r(t)}+\frac{c^2}{2s(t)}) \alpha^2(t)\\
\gamma(T)=\frac{ \bar{q}(T)}{2}m^2(T),
\end{array}
\right.
\end{equation}
 with equality if and only if 
\begin{equation}
\bar u(t)=-\frac{b}{r(t)}(\beta(t) x(t)+\alpha(t)),\quad 
\bar v(t)=\frac{c}{s(t)}(\beta(t) x(t)+\alpha(t)).
\end{equation}

Since $b>0, r>0, s>0$ by assumption, the robust Riccati  equation (\ref{r-rn-1}) in $\beta$ has a unique positive solution if  $\frac{b^2}{r}- \frac{c^2}{s}>0$.  Injecting $\beta$ into the equation satisfied by $\alpha$ we get a solution $\alpha(m)$ by direct integration. Thus, there exists a unique best-response strategy whenever $\frac{b^2}{r}> \frac{c^2}{s}>0.$

\begin{remark} Setting $c^2/s=\theta \sigma^2$ in the robust Riccati equation  (\ref{r-rn-1}), we obtain  the  risk-sensitive Riccati equation (\ref{rs-1}).  
\end{remark}

\subsection{Risk-Neutral Robust Mean-Field Equilibrium}
If $(\bar x, \bar u,\bar v, m)$ is a robust mean-field equilibrium then $m$ solves the following fixed-point equation:
\begin{equation}\label{FP-2}
m=\Phi[m],
\end{equation}
where,
\begin{equation*}
\Phi[m](t):=m_0+\int_0^t (a+\bar{a})m(t')+(-\frac{b^2}{r(t')}+\frac{c^2}{s(t')}) (\beta(t') m(t')+\alpha[m](t'))dt'.
\end{equation*}
Since the term $(-\frac{b^2}{r}+\frac{c^2}{s})$ is changed compared to the previous analysis and  $$L_{\Phi} < T\left[ g_{3T}+\tilde g_{3T}(\bar{q}_T+\epsilon_3 e^{T | a+(-b^2/r+c^2/s)\beta|_{T}}) \right],$$
where, $g_{3}=| a+\bar{a}+(-b^2/r+c^2/s)\beta|_{T},\ \tilde{g}_{3}=|  (-b^2/r+c^2/s)|_T,\  \epsilon_3=|\bar a\beta-\bar q |_{T}.$
Thus, we have proved the following 
\begin{proposition} Consider the robust linear-quadratic game associated with (\ref{LQ00game2}).
Suppose that $r>0, s>0, $ and $\frac{b^2}{r}> \frac{c^2}{s}>0$ then there exists a unique best response strategy $\bar u=-\frac{b}{r}(\beta x+\alpha),$ and the worst case disturbance is $\bar v= \frac{c}{s}(\beta x+\alpha).$
where $\alpha$ and $\beta$ solves the robust Riccati equation (\ref{r-rn-1}).

\noindent In addition, if $T\left[ g_{3T}+\tilde g_{3T}(\bar{q}_T+\epsilon_3 e^{T | a+(-b^2/r+c^2/s)\beta|_{T}}) \right] < 1, $  then there is a unique   risk-neutral robust mean-field equilibrium.
\end{proposition}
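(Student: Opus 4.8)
The plan is to split the statement into its two assertions and handle them in the order given, since the second depends on the first. For the first assertion—existence and uniqueness of the best-response pair $(\bar u,\bar v)$ under the assumption $\frac{b^2}{r}>\frac{c^2}{s}>0$—I would simply invoke the square-completion identity derived just above in the subsection "Determining the robust best-response of a player." That computation already exhibits the decomposition of $L_2(u,v)-\frac12\beta(0)x^2(0)-\alpha(0)x(0)-\gamma(0)$ into a deterministic part (which vanishes precisely when $(\alpha,\beta,\gamma)$ solve the system \eqref{r-rn-1}), a martingale part with zero expectation, and the two signed quadratic terms $\frac{r}{2}[u+\frac{b}{r}(\beta x+\alpha)]^2$ and $-\frac{s}{2}[v-\frac{c}{s}(\beta x+\alpha)]^2$. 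Taking expectations and using the saddle structure ($\inf_u\sup_v$ of a sum of a positive and a negative quadratic, with no coupling between $u$ and $v$ after completion) gives the value $\frac12\beta(0)x^2(0)+\alpha(0)x(0)+\gamma(0)$, attained uniquely at $\bar u=-\frac{b}{r}(\beta x+\alpha)$, $\bar v=\frac{c}{s}(\beta x+\alpha)$. The only genuine content is that the robust Riccati equation for $\beta$ has a global positive solution on $[0,T]$: since the coefficient of $\beta^2$ is $-(\frac{b^2}{r}-\frac{c^2}{s})<0$, this is a Riccati equation of the same sign pattern as the risk-neutral one, so the standard comparison/monotonicity argument (already used for \eqref{rn-1}) applies and $\beta$ stays finite and nonnegative; then $\alpha[m]$ is obtained by direct integration of its linear ODE, and one must also check $\bar u,\bar v\in\mathcal U\times\mathcal V$, which follows from Lemma~\ref{L1}(1) applied to the closed-loop dynamics.

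For the second assertion I would set up the fixed-point map $\Phi$ of \eqref{FP-2} on the complete metric space $L^k(0,T;\dbR)$ and show it is a contraction under the stated smallness condition. The steps are: (i) by Lemma~\ref{L1}(1), $\Phi$ maps $L^k(0,T;\dbR)$ into itself; (ii) for two inputs $m_1,m_2$, estimate $|\Phi[m_1](t)-\Phi[m_2](t)|$ by splitting the integrand into the term $(a+\bar a+(-\frac{b^2}{r}+\frac{c^2}{s})\beta)(m_1-m_2)$, which contributes $g_3$, and the term $(-\frac{b^2}{r}+\frac{c^2}{s})(\alpha[m_1]-\alpha[m_2])$; (iii) control $|\alpha[m_1]-\alpha[m_2]|$ by Gronwall applied to the linear ODE for $\alpha$, whose inhomogeneous term is $(\bar a\beta-\bar q)(m_1-m_2)$, giving the factor $\epsilon_3 e^{T|a+(-b^2/r+c^2/s)\beta|_T}$ together with the terminal contribution $\bar q_T$; (iv) combine to get $L_\Phi<T[g_3+\tilde g_3(\bar q_T+\epsilon_3 e^{T|a+(-b^2/r+c^2/s)\beta|_T})]<1$, and apply the Banach fixed-point theorem. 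This is entirely parallel to the risk-neutral Proposition already proved, with $-\frac{b^2}{r}$ replaced by $-\frac{b^2}{r}+\frac{c^2}{s}$ throughout, so I would state it as "the same computation as in the proof of Proposition~1, mutatis mutandis."

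The main obstacle—or rather the one point needing care rather than mere copying—is the interplay between the two parts: the bound on $L_\Phi$ in step (iv) is only meaningful if $\beta$ (and hence $|a+(-b^2/r+c^2/s)\beta|_T$ and $\epsilon_3=|\bar a\beta-\bar q|_T$) is finite on all of $[0,T]$, which is exactly what the condition $\frac{b^2}{r}>\frac{c^2}{s}>0$ buys us in the first part; so the second assertion must be understood to presuppose the hypotheses of the first. I would make this dependence explicit at the start of the proof. A secondary subtlety is that the fixed-point equation for $m$ is driven by the expectation of the \emph{closed-loop} state under $(\bar u,\bar v)$: one substitutes $\bar u$ and $\bar v$ into the dynamics, takes expectations (the $\sigma\,dB$ term drops), and reads off the linear integral equation defining $\Phi$; this is routine but should be spelled out so the reader sees where the $(-\frac{b^2}{r}+\frac{c^2}{s})(\beta m+\alpha)$ term comes from. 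No min-max PDE (Hamilton–Jacobi–Isaacs) is invoked anywhere, which is the point emphasized in the introduction.
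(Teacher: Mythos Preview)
Your proposal is correct and follows essentially the same approach as the paper: the square-completion identity already derived in the preceding subsection, then the fixed-point argument on $\Phi$ obtained by replacing $-b^2/r$ with $-b^2/r+c^2/s$ in the risk-neutral analysis. If anything you are slightly more careful than the paper about articulating the saddle-point structure (the paper writes the one-sided inequality \eqref{r-rn} for all $(u,v)$, which is not literally what the decomposition gives) and about the admissibility check via Lemma~\ref{L1}(1), but the route is the same.
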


\section{ Linear exponential-quadratic robust mean-field game} \label{sec:robustrisk}

The risk-sensitive robust  best-response of a player is  a solution to the following minmax problem:
\begin{eqnarray}
\label{LQ00gamers2} 
\left\{
\begin{array}{lll} \inf_{u(\cdot)\in \cU} \sup_{v(\cdot)\in \cV}  E e^{\theta L_2(u,v)},
\\
\displaystyle{\mbox{ subject to }\ }\\
dx(t)=[ax(t)+\bar{a}m(t)+bu(t)+cv(t)]dt+\sigma dB(t),\\
x(0)\in\mathbb{R}.\\ 
 \end{array}
\right.
\end{eqnarray}

Similarly as above, any $(\bar u(\cdot),\bar v(\cdot))\in {\cal U}\times  {\cal V}$ satisfying the min-max in (\ref{LQ00gamers2})
is called a risk-sensitive robust best-response of a generic player to the mean-field process $(m(t))_t$ under worst case disturbance $v$. The corresponding state process, solution of (\ref{LQ00gamers2}), is denoted by $\bar x(\cdot):=x^{\bar u, \bar v}[m](\cdot).$

 The risk-sensitive robust mean-field equilibrium problem we are concerned with is to characterize the pair $(\bar x,\bar u, \bar v)$ solution of the  problem (\ref{LQ00gamers2}) and the state created by all the players coincide with $m$, i.e.,
$$m(t)=E[x^{\bar u, \bar v}[m](t)], $$ which is a fixed-point equation. Completing with the term  
$$
-\frac{1}{2}\int_0^T \theta^2\sigma^2 (\beta(t) x(t)+\alpha(t))^2 dt
  +\frac{1}{2}\theta \int_0^T \theta \sigma^2 (\beta(t) x(t)+\alpha(t))^2 dt
$$ 
in $L_2(u,v)$ we obtain
\begin{eqnarray} &&
\theta[L_2(u,v)-\frac{1}{2}\beta(0) x^2(0)-\alpha(0) x(0)-\gamma(0)]\\ \nonumber
&=&
\theta \int_0^T  \{\frac{\dot \beta(t)}{2}+a\beta(t) +\frac{q(t)+\bar{q}(t)}{2}+(-
 \frac{b^2}{2r(t)}+ \frac{c^2}{2s(t)}+\frac{\theta}{2}\sigma^2)\beta^2(t)\} x^2(t) dt \\ \nonumber
 &+& \theta \int (\dot{\alpha}(t)+a\alpha(t)+\bar{a}\beta(t) m(t)- \bar{q}(t) m(t)+(-\frac{b^2}{r(t)}+\frac{c^2}{s(t)}+\theta\sigma^2)\alpha(t)\beta(t) )x(t) dt\\ \nonumber
 &+&
   \theta \int (\dot{\gamma}(t)+\bar{a}\alpha(t) m(t)+\frac{\sigma^2}{2}\beta(t)+ \frac{ \bar{q}(t)}{2}m^2(t)+(-\frac{b^2}{2r(t)}+\frac{c^2}{2s(t)}+\frac{\theta}{2}\sigma^2) \alpha^2(t))
  dt \\ \nonumber
  &+&
   \theta  \int_0^T   \frac{r(t)}{2}\left[  u+\frac{b}{r(t)}(\beta(t) x(t)+\alpha(t))\right]^2-\frac{s(t)}{2}\left[  v(t)-\frac{c}{s(t)}(\beta(t)x(t)+\alpha(t))\right]^2\ dt
  \\ &+&
  \int_0^T \theta \sigma (\beta(t) x(t)+\alpha(t)) dB(t)-
  \frac{1}{2}\int_0^T \theta^2 \sigma^2 (\beta(t) x(t)+\alpha(t))^2 dt.
\end{eqnarray}
Taking exponential and the expectation yields
\begin{eqnarray} \nonumber 
\mathbb{E}e^{\theta[L_2(u,v)]}\ge \mathbb{E}\left[ {\cal E}_T(x) \right]\exp{\theta\left[\frac{1}{2}\beta(0) x^2(0)+\alpha(0) x(0)+\gamma(0)\right]},\quad (u,v)\in \mathcal{U}\times\mathcal{V},
\end{eqnarray}
where,
$$
{\cal E}_T(x):=\exp{\left( \int_0^T \theta \sigma (\beta(t) x(t)+\alpha(t)) dB(t)-
  \frac{1}{2}\int_0^T \theta^2 \sigma^2 (\beta(t) x(t)+\alpha(t))^2 dt\right)},
$$
if and only if 
\begin{equation}\label{r-rs-1}
\left\{\begin{array}{lll}
\dot \beta(t)+2a\beta(t) +(-\frac{b^2}{r(t)}+ \frac{c^2}{s(t)}+\theta \sigma^2)\beta^2(t)+q(t)+\bar{q}(t)=0,\\
\beta(T)=q(T)+\bar{q}(T)\geq 0,\\
\dot{\alpha}(t)+a\alpha(t)+(\bar{a}\beta(t)- \bar{q}(t)) m(t)\\ \ \ +(-\frac{b^2}{r(t)}+\frac{c^2}{s(t)}+\theta \sigma^2)\alpha(t)\beta(t)=0,\\
\alpha(T)= -\bar{q}(T)m(T),\\
\dot{\gamma}(t)+\bar{a}\alpha(t) m(t)+\frac{\sigma^2}{2}\beta(t)+ \frac{ \bar{q}(t)}{2}m^2(t)+(-\frac{b^2}{2r(t)}+\frac{c^2}{2s(t)}+\frac{\theta}{2}\sigma^2) \alpha^2(t),\\
\gamma(T)=\frac{ \bar{q}(T)}{2}m^2(T),
\end{array}\right.
\end{equation}
with equality if and only if
\begin{equation}\label{r-rs-opt}
\bar u(t)=-\frac{b}{r(t)}(\beta (t) x(t)+\alpha(t)), \,\,\,
\bar v(t)=\frac{c}{s(t)}(\beta(t) x(t)+\alpha(t)).
\end{equation}

The risk-sensitive robust Riccati equation in $\beta$ above has a unique positive solution $\beta(t)\geq 0$ if $
 \frac{b^2}{r}- \frac{c^2}{s}-\theta \sigma^2>0,$  which is satisfied if $\theta$ and $c$ are  relatively small enough. In this case, in view of Lemma \ref{L1}, $E[{\cal E}_T(x)]=1$, which yields
\begin{equation} \nonumber 
\mathbb{E}e^{\theta[L_2(u,v)]}\ge e^{\theta\left[\frac{1}{2}\beta(0)x^2(0)+\alpha(0) x(0)+\gamma(0)\right]}, \quad (u,v)\in \mathcal{U}\times\mathcal{V},
\end{equation}
with equality if $(u, v)=(\bar u,\bar v),$ which  is a unique  best response strategy to the mean-field for  $\frac{b^2}{r}> \frac{c^2}{s}+\theta \sigma^2>0$. 

\noindent For $T$ and $\theta, c$ sufficiently small enough, the risk-sensitive mean-field game is completely solvable. Note however that for large $\theta,$ or $c,$ the solution $\beta$ may blow-up in finite time and  the control $-\frac{b}{r}(\beta x+\alpha)$ becomes  non-admissible.

\noindent Thus, we have proved the following 
\begin{proposition}
Suppose that $r>0, s>0, q>0, \bar{q}\geq 0$ and $\frac{b^2}{r}> \frac{c^2}{s}+\theta \sigma^2>0$ then there exists a unique best response strategy $\bar u=-\frac{b}{r}(\beta x+\alpha)$ and the worst case disturbance is $\bar v= \frac{c}{s}(\beta x+\alpha).$
where $\alpha$ and $\beta$ solves the risk-sensitive Riccati equations (\ref{r-rs-1}).

\noindent 
In addition, if $T\left[ g_{4}+\tilde g_{4}(\bar{q}+\epsilon_4 e^{T | a+(-b^2/r+c^2/s+\theta \sigma^2)\beta|_{T}}) \right] < 1$ then there is a unique robust  risk-sensitive mean-field equilibrium, where,
$g_{4}=| a+\bar{a}+(-b^2/r+c^2/s)\beta|_{T},\ \tilde{g}_{4}=|  (-b^2/r+c^2/s)|_T,\  \epsilon_4=|\bar a\beta-\bar q |_{T}.$
 \end{proposition}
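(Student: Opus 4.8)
The plan is to run, essentially verbatim, the square-completion plus Girsanov-type change-of-measure argument already used for the non-robust risk-sensitive game, with one extra square completion to absorb the maximizing disturbance $v$. First I would apply It\^o's formula to $f(t,x)=\tfrac12\beta(t)x^2+\alpha(t)x+\gamma(t)$ along the dynamics of (\ref{LQ00gamers2}) and, as in the non-robust case, augment $\theta L_2(u,v)$ by the zero-sum term $-\tfrac12\int_0^T\theta^2\sigma^2(\beta x+\alpha)^2\,dt+\tfrac12\theta\int_0^T\theta\sigma^2(\beta x+\alpha)^2\,dt$, one copy being absorbed into the $x^2$-, $x$- and $dt$-coefficients and the other pairing with the It\^o term $\int_0^T\theta\sigma(\beta x+\alpha)\,dB$ to produce $\log{\cal E}_T(x)$. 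Completing the square in $u$ yields $\tfrac{r}{2}\big(u+\tfrac{b}{r}(\beta x+\alpha)\big)^2$ and completing the square in $v$ yields $-\tfrac{s}{2}\big(v-\tfrac{c}{s}(\beta x+\alpha)\big)^2$; forcing the residual $x^2$-, $x$- and $dt$-integrands to vanish gives exactly the system (\ref{r-rs-1}), whose Riccati equation carries the quadratic coefficient $-\tfrac{b^2}{r}+\tfrac{c^2}{s}+\theta\sigma^2$.

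The next and decisive step is to show that, under $\tfrac{b^2}{r}>\tfrac{c^2}{s}+\theta\sigma^2>0$, this Riccati equation has a unique solution $\beta\ge0$ on all of $[0,T]$ with no finite-time blow-up: the terminal value $q(T)+\bar q(T)\ge0$ together with the nonnegative forcing $q+\bar q$ keeps $\beta$ nonnegative, while the \emph{negative} quadratic term $-(\tfrac{b^2}{r}-\tfrac{c^2}{s}-\theta\sigma^2)\beta^2$ supplies the a priori bound that rules out explosion integrating backward from $T$. With $\beta$ bounded the equation for $\alpha$ is linear, hence has a unique bounded solution $\alpha=\alpha[m]$ depending affinely and continuously on $m$, and $\gamma$ follows by direct integration. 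Boundedness of $\beta,\alpha$ makes $x\mapsto\theta\sigma(\beta(t)x+\alpha(t))$ of linear growth uniformly in $t$, so Lemma~\ref{L1}(2) applies and the Girsanov density ${\cal E}_T(x)$ is a genuine $\dbF$-martingale with $\mathbb E[{\cal E}_T(x)]=1$ for \emph{every} admissible $(u,v)$. Without the sign condition one only obtains a supermartingale (so the change of measure degrades to an inequality) and the feedback $-\tfrac{b}{r}(\beta x+\alpha)$ need not even be square-integrable — this is the main obstacle of the proof.

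Exponentiating the reorganised identity, taking expectations and using $\mathbb E[{\cal E}_T(x)]=1$ gives, for all admissible $(u,v)$,
\begin{equation*}
\mathbb E\, e^{\theta L_2(u,v)}=e^{\theta\left(\frac12\beta(0)x^2(0)+\alpha(0)x(0)+\gamma(0)\right)}\;\mathbb E^{\mathbb Q^{u,v}}\!\left[e^{\theta\int_0^T\frac{r}{2}\left(u+\frac{b}{r}(\beta x+\alpha)\right)^2dt}\,e^{-\theta\int_0^T\frac{s}{2}\left(v-\frac{c}{s}(\beta x+\alpha)\right)^2dt}\right],
\end{equation*}
with $\mathbb Q^{u,v}$ the measure of density ${\cal E}_T(x)$. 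The minimizer's factor is $\ge1$, with equality a.e.\ iff $u=-\tfrac{b}{r}(\beta x+\alpha)$, and the maximizer's factor is $\le1$, with equality a.e.\ iff $v=\tfrac{c}{s}(\beta x+\alpha)$; since $r,s>0$ both characterisations are sharp. Plugging both affine feedbacks into the dynamics produces a linear SDE with a unique strong solution $\bar x$, and the two-sided bound $\inf_u\sup_v\ge\sup_v\inf_u$ together with the fact that the outer supremum at $u=\bar u$ and the outer infimum at $v=\bar v$ both equal $e^{\theta(\frac12\beta(0)x^2(0)+\alpha(0)x(0)+\gamma(0))}$ identifies $(\bar u,\bar v)$ in (\ref{r-rs-opt}) as the unique saddle point, with that constant as the value.

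Finally, for the equilibrium, substitute $\bar u,\bar v$ into the dynamics, take expectations and impose $E[\bar x]=m$: the closed-loop mean solves $m=\Phi[m]$ with $\Phi[m](t)=m_0+\int_0^t[(a+\bar a)m+(-\tfrac{b^2}{r}+\tfrac{c^2}{s})(\beta m+\alpha[m])]\,dt'$, the same operator as in the risk-neutral robust case but with $\beta$ now solving (\ref{r-rs-1}). By Lemma~\ref{L1}(1) (or the linear a priori bound) $\Phi$ maps $L^k(0,T;\dbR)$ into itself, and a Gronwall estimate as in the previous sections bounds its Lipschitz constant by $T\big[g_4+\tilde g_4(\bar q_T+\epsilon_4 e^{T|a+(-b^2/r+c^2/s+\theta\sigma^2)\beta|_T})\big]$ with $g_4,\tilde g_4,\epsilon_4$ as stated; under the smallness hypothesis this is $<1$, so $\Phi$ is a strict contraction on the complete metric space $L^k(0,T;\dbR)$ and Banach's fixed-point theorem yields a unique $m$, hence a unique $(\bar x,\bar u,\bar v,m)$. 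As noted, the technical heart throughout is the nonnegativity and global existence of the Riccati solution $\beta$ and the resulting boundedness of $\alpha$, on which both the true-martingale property $\mathbb E[{\cal E}_T(x)]=1$ and the admissibility of the feedbacks rest; a minor secondary check, trivial once both optimizers are affine feedbacks, is that the square-completion saddle is an actual $\inf_u\sup_v$ saddle.
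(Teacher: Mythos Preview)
Your proposal is correct and follows essentially the same route as the paper: It\^o on $\tfrac12\beta x^2+\alpha x+\gamma$, double square completion in $u$ and $v$, the add-and-subtract of $\tfrac12\theta^2\sigma^2(\beta x+\alpha)^2$ to form the exponential martingale, Lemma~\ref{L1}(2) to get $E[{\cal E}_T(x)]=1$ under the sign condition, and then the Banach contraction for the fixed point. If anything you are slightly more careful than the paper in explicitly verifying the saddle via the two one-sided bounds $\sup_v$ at $u=\bar u$ and $\inf_u$ at $v=\bar v$, whereas the paper simply asserts the $\ge$ inequality with equality at $(\bar u,\bar v)$.
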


\begin{remark}
If the mean-field term is  a function of the equilibrium control action  $\xi(E[\bar u])$ instead of the mean state $\bar m$, the same methodology applies, except that the fixed-point equation  is now different $$E[\bar u]=-\frac{\bar{b}}{r}\left[ \beta \bar m+\alpha[ \xi(E[\bar u])]\right].$$  This type of mean-field structures is observed in smart grids where the mean-field term which modulates the price of electricity is a function of aggregate demand and supply. The demand is generated by the users (residential consumers, commercial, industrial, transportation etc). It is also relevant in wireless networking where the mean-field term is the interference  $E[\bar u.|\bar x|^2]$   created by other users and $\bar u\geq 0$ is the transmission power of the user. See \cite{wireless} for more details.
\end{remark}

\section{Concluding remarks} \label{sec:conclusion}
The method described in this paper provides an alternative to both the Hamilton-Jacobi-Bellman-Isaacs equation and the robust stochastic maximum principle for the particular case of LQ-games. The method  exhibits the best-response strategy  of the player and the best-response cost directly in the problem solution. The mean-field equilibrium is then formulated as a fixed-point of the best-response. Sufficient conditions for existence and uniqueness of mean-field equilibria are provided. The approach can be extended to more general LQ-games in several ways: (i) matrix form, (ii)  games with anomalous diffusions, (iii) partially observable games, (iv)    robust cooperative mean-field-type games. We leave these questions  for future work. We note that this method can hardly be extended to games with arbitrary  nonlinear dynamics and cost performance.

\end{document}